\date{September 7, 2017}
\title{Complete flat fronts as hypersurfaces in Euclidean space}
\author[A.~Honda]{Atsufumi Honda}
\address{%
   Department of Applied Mathematics, 
   Faculty of Engineering, Yokohama National University, 
   79-5 Tokiwadai, Hodogaya, Yokohama 240-8501, Japan
}
\email{honda-atsufumi-kp@ynu.jp}
\thanks{This work is supported by 
the Grant-in-Aid for Young Scientists (B) No.~16K17605 from JSPS}
\subjclass[2010]{%
Primary 53C42; 
Secondary 57R45. 
}
\keywords{%
flat hypersurface
flat front
completeness
Hartman--Nirenberg's theorem
singular point
wave front,
coherent tangent bundle.
}
\theoremstyle{plain}
 \newtheorem{theorem}{Theorem}[section]
 \newtheorem{introtheorem}{Theorem}
 \newtheorem{proposition}[theorem]{Proposition}
 \newtheorem{fact}[theorem]{Fact}
 \newtheorem*{fact*}{Fact}
 \newtheorem{lemma}[theorem]{Lemma}
 \theoremstyle{remark}
 \newtheorem{definition}[theorem]{Definition}
 \newtheorem*{acknowledgements}{Acknowledgements}
\numberwithin{equation}{section}
\newcommand{\Z}{\mathbb{Z}}
\newcommand{\R}{\mathbb{R}}
\newcommand{\inner}[2]{\left\langle{#1},{#2}\right\rangle}
\newcommand{\vect}[1]{\boldsymbol{#1}}
\newcommand\innerrule{%
   \omit{\let\strut\relax
     \hrulefill\vspace*{-\arrayrulewidth}%
   }%
}
\newcommand\vcentertext[1]{%
   \let\strut\relax
   \vbox to0pt{\vss \hbox{#1} \vss}%
}
\begin{document}
\begin{abstract}
By Hartman--Nirenberg's theorem, 
any complete flat hypersurface in Euclidean space
must be a cylinder over a plane curve.
However, if we admit some singularities,
there are many non-trivial examples. 
{\it Flat fronts} are flat hypersurfaces with admissible singularities.
Murata--Umehara gave a representation formula
for complete flat fronts with non-empty singular set
in Euclidean $3$-space,
and proved the four vertex type theorem.
In this paper, we prove that,
unlike the case of $n=2$, 
there do not exist any complete flat fronts 
with non-empty singular set in Euclidean $(n+1)$-space
$(n\geq3)$.
\end{abstract}
\maketitle

\section{Introduction}
\label{intro}
Let $\R^{n+1}$ be the Euclidean $(n+1)$-space.
By Hartman--Nirenberg's theorem\footnote{%
Massey \cite{Massey} gave an alternative proof
for $n=2$.} \cite{HN},
{\it any complete flat hypersurface in $\R^{n+1}$
must be a cylinder\footnote{%
A cylinder is a regular hypersurface
$f : \R^n \rightarrow \R^{n+1}$ given by
$
f(t,w_2,\dots,w_n):=(x(t), y(t),w_2,\dots,w_n),
$
where $t\mapsto (x(t), y(t))$ is a regular curve in $\R^2$.}
over a plane curve}.

However, in $\R^3$,
there are non-trivial flat surfaces with admissible singularities
called {\it flat fronts\/}.
Here, a {\it front} is a generalized notion 
of regular surfaces (more generally, regular hypersurfaces)
with admissible singular points.
See Section \ref{sec:prelim} for precise definitions.
Murata--Umehara gave a representation formula
for complete flat fronts with non-empty singular set,
and proved the four vertex type theorem:

\begin{fact}[\cite{MU}]\label{fact:MU}
Let $\xi : S^1 \rightarrow S^2$ 
be a regular curve without inflection points,
and $\alpha=a(t)dt$ a $1$-form on $S^1=\R/2\pi\Z$
such that 
$\int_{S^1} \xi\, \alpha=0$
holds. 
Then,
$f_{\xi,\alpha} : S^1\times \R \rightarrow \R^3$
defined by
\begin{equation}\label{eq:MU}
  f_{\xi,\alpha}(t,v) := \hat{\sigma}(t) + v\, \xi(t)\qquad
  \left( \hat{\sigma}(t):=\int_0^t a(\tau)\,\xi(\tau) d\tau \right)
\end{equation}
is a complete flat front with non-empty singular set.
Conversely, let $f : M^2 \rightarrow \R^3$ be a complete flat front
defined on a connected smooth $2$-manifold $M^2$.
If the singular set $S(f)$ of $f$ is not empty,
then $f$ is umbilic-free, co-orientable, 
$M^2$ is diffeomorphic to $S^1\times \R$,
and $f$ is given by \eqref{eq:MU}.
Moreover, if the ends of $f$ are embedded, 
$f$ has at least four singular points other than cuspidal edges.
\end{fact}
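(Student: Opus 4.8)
The plan is to prove the two directions of the statement separately. For the direct part I would check by direct computation that $f_{\xi,\alpha}$ is a complete flat front. Since $\hat\sigma'=a\xi$ and $|\xi|\equiv1$ (hence $\xi\perp\xi'$), the only candidate for a unit normal is $\nu:=\xi\times\xi'/|\xi'|$, which depends on $t$ alone; one checks that the hypothesis that $\xi$ is free of inflection points is precisely the condition $\nu'\ne0$. This is exactly what makes $(f_{\xi,\alpha},\nu)$ an immersion, since $d(f_{\xi,\alpha},\nu)(\partial_v)=(\xi,0)$ has vanishing $S^2$-component while $d(f_{\xi,\alpha},\nu)(\partial_t)=(f_t,\nu')$ does not, so $f_{\xi,\alpha}$ is a front; it is flat because $d\nu$ has rank $\le1$, forcing $K\equiv0$ on the regular set. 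The closing condition $\int_{S^1}\xi\,\alpha=0$ is what makes $\hat\sigma$ (and hence $f_{\xi,\alpha}$) well defined on $S^1\times\R$. The singular set is $\{v=0\}$, because $f_t=a\xi+v\xi'$ is proportional to $f_v=\xi$ only when $v=0$; and since $ds^2=(a^2+v^2|\xi'|^2)\,dt^2+2a\,dt\,dv+dv^2$ is positive definite and complete away from $v=0$, completeness of the front follows by adding a compactly supported tensor such as $\chi(v)\,dt^2$ ($\chi$ a bump equal to $1$ near $v=0$) to make $ds^2+T$ positive definite everywhere.

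For the converse, let $f:M^2\to\R^3$ be a complete flat front with $S(f)\ne\emptyset$. On the open set $W$ where the shape operator has rank $1$, $f$ is developable: it is ruled by relative-nullity leaves (line segments) along which $\nu$ is constant, and locally $f(t,v)=\gamma(t)+v\,\xi(t)$ with $\nu=\nu(t)$. Since $(f,\nu)$ is an immersion, $\ker df\cap\ker d\nu=\{0\}$, so at a singular point $\dim\ker df\le\operatorname{rank}d\nu\le1$, forcing $\operatorname{rank}d\nu=1$; hence $S(f)\subset W$ and $f$ restricted to any ruling is an immersion even along $S(f)$. I would then establish, in order: (i) $S(f)$ is compact — immediate, since if $T$ has compact support and $ds^2+T$ is complete then $ds^2$ is positive definite off $\operatorname{supp}T$, whence $S(f)\subset\operatorname{supp}T$; (ii) $f$ has no umbilic points, i.e.\ $W=M$ — a completeness argument showing a flat umbilic is incompatible with $S(f)\ne\emptyset$; (iii) every ruling is a complete line — by adapting Hartman--Nirenberg: relative-nullity leaves are totally geodesic on the regular set, a geodesic in such a leaf of finite length projects to a straight segment and must therefore converge in $M$ by completeness, so the leaves are complete lines. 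The leaf space is then a connected $1$-manifold covered by the compact curve $S(f)$, so it is $S^1$, and since $S(f)$ is transverse to the rulings, $M\cong S^1\times\R$.

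Granting (i)--(iii), I would finish the reconstruction. Writing $f(t,v)=\gamma(t)+v\,\xi(t)$ globally, a computation shows $S(f)$ meets the ruling over $t_0$ in $\{\,B_0(t_0)+v\,B_1(t_0)=0\,\}$ with $B_1=-\langle\xi,\nu''\rangle$; compactness of $S(f)$ forces $B_1$ to be nowhere zero (near a zero of $B_1$ with $B_0\ne0$ the singular point would run off to $v=\pm\infty$), so $S(f)$ is a graph transverse to the rulings meeting each one exactly once, and $f$ is co-orientable with globally defined ruling direction $\xi:S^1\to S^2$. Re-choosing the fibre coordinate so that $\{v=0\}=S(f)$ puts $f$ in the form $\hat\sigma(t)+v\,\xi(t)$ with $\hat\sigma$ the (closed) singular curve and $\hat\sigma'=a\xi$, $a(t):=\langle\hat\sigma'(t),\xi(t)\rangle$; the closing condition holds because $\hat\sigma$ is closed, so $f=f_{\xi,\alpha}$ with $\alpha=a\,dt$. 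Finally, when the ends are embedded, I would deduce the four-vertex statement: the singular points other than cuspidal edges are exactly the points over the zeros of $a$ (there the null direction $\partial_t-a\partial_v$ becomes tangent to $S(f)$), and embeddedness of the ends makes $\xi$ convex enough that $a$, which the closing condition forces to be $L^2(S^1)$-orthogonal to each component of $\xi$, must change sign at least four times — an argument in the spirit of the classical four-vertex theorem.

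The main obstacles lie in the converse: proving that $f$ is umbilic-free, and above all that the rulings are complete, is delicate because $ds^2$ degenerates precisely along $S(f)$, so the abstract completeness of $ds^2+T$ must be turned into genuine geometric control of the ruling geodesics as they approach the singular set — one knows only that the Legendrian lift $(f,\nu)$ along such a geodesic converges, and must upgrade this to convergence in $M$. The four-vertex count is a second, independent difficulty, requiring the correct $2\pi$-periodic function and a Sturm-type zero estimate.
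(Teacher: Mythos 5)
This statement is Fact~\ref{fact:MU}, which the paper quotes from Murata--Umehara \cite{MU} without proof, so there is no in-paper argument to measure yours against; I can only assess the proposal on its own terms. Your direct part is essentially complete and correct: $\nu=\xi\times\xi'/|\xi'|$ is the Gauss map, a short Frenet computation gives $\nu'=-\kappa_g\,\xi'$ so that ``no inflection points'' is exactly the front condition along $S(f)=\{v=0\}$, the hypothesis $\int_{S^1}\xi\,\alpha=0$ is indeed only the closing condition for $\hat\sigma$, and the completeness argument via $T=\chi(v)\,dt^2$ works (the determinant of $ds^2+T$ is $v^2|\xi'|^2+\chi>0$, and divergent curves must have $|v|\to\infty$).

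The converse, however, is a skeleton whose load-bearing steps are only named, and they are exactly where the content of \cite{MU} lies. For (ii) you offer no actual argument that a complete flat front with $S(f)\neq\emptyset$ is umbilic-free; ``a completeness argument showing a flat umbilic is incompatible with $S(f)\neq\emptyset$'' is a statement of the goal, not a proof. For (iii), fighting the degeneracy of $ds^2$ along $S(f)$ directly is the wrong tool: the standard device --- and the one this paper itself uses in Section~\ref{sec:proof} for $n\geq3$ --- is to pass to the lift metric $ds^2_\#=ds^2+d\nu\cdot d\nu$, which is Riemannian everywhere including on $S(f)$, invoke Fact~\ref{fact:MU_complete} (complete $\Rightarrow$ weakly complete), and check that the rulings are geodesics of $ds^2_\#$; completeness of the rulings then comes for free, with no delicate limiting argument near the singular set. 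Finally, the four-vertex statement is a genuinely separate theorem: you correctly identify the non-cuspidal-edge singular points with the zeros of $a$ and the relevance of the orthogonality $\int a\,\xi_i\,dt=0$, but the Sturm--Hurwitz-type zero estimate (which is where embeddedness of the ends enters, through convexity of the curve dual to $\xi$) is not supplied. So: direct part sound; converse and four-vertex count have real gaps that your own closing paragraph candidly acknowledges.
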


Therefore, it is natural to ask what occurs in the higher dimensional cases.
In this paper, we prove that there do not exist 
any non-trivial flat fronts in higher dimensions:

\begin{introtheorem}\label{thm:main}
If $n\geq3$, there do not exist any complete flat fronts 
with non-empty singular set in $\R^{n+1}$.
\end{introtheorem}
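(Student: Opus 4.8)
The plan is to combine the local structure theory of flat fronts (the asymptotic, or ruling, foliation) with the Murata--Umehara notion of completeness; the point will be that for $n\ge3$ a non-empty singular set is forced to be non-compact, whereas completeness forces it to be compact.

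First I would record the local picture near a singular point. Every singular point of a flat front is a point where the Gauss map $\nu$ is non-degenerate: at a singular point $p$ the kernel of $df_p$ is non-trivial, so the wave front condition forces $d\nu_p$ to be injective on that kernel, hence $d\nu_p\ne0$; and since $f$ is flat one has $\operatorname{rank}d\nu\le1$ everywhere (equivalently, the second fundamental form has rank at most one). Thus a neighbourhood of the singular set $S(f)$ is umbilic-free, and there $f$ carries the standard developable structure: $M^n\setminus U$ is foliated by $(n-1)$-dimensional leaves along which $\nu$ is constant and $f$ is affine, each leaf being mapped into an affine $(n-1)$-plane $P(t)$ (a characteristic plane of the one-parameter family of tangent hyperplanes). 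In ruling coordinates $(t,w)=(t,w_2,\dots,w_n)$ one writes $f(t,w)=\sigma(t)+\sum_{i=2}^{n}w_i\,\eta_i(t)$ with $\nu=\nu(t)$, and the singular set is cut out inside each $P(t)$ by the vanishing of a single affine function $\rho(t,\cdot)$ on $P(t)$, namely the component of $\partial_t f$ transverse to the ruling. Hence, in every ruling that meets $S(f)$, the intersection $S(f)\cap P(t)$ is an affine subspace of $P(t)$ of dimension $n-1$ or $n-2$.

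Next, completeness. By definition a front $f$ is complete when $ds^2+T$ is a complete Riemannian metric for some symmetric $2$-tensor $T$ with compact support, where $ds^2$ is the first fundamental form. Since $ds^2$ is degenerate precisely at the points of $S(f)$ while $ds^2+T$ is everywhere positive definite, we must have $T\ne0$ on $S(f)$, so $S(f)\subset\operatorname{supp}T$ is compact. The second input I need from completeness is that each ruling leaf meeting $S(f)$ is mapped by $f$ onto the \emph{entire} affine $(n-1)$-plane $P(t)$, not merely onto a proper subset of it. I expect this to be the main obstacle: it is the higher-dimensional analogue of the classical fact (Hartman--Nirenberg; Massey for $n=2$) that the rulings of a complete flat hypersurface are complete lines, and of the corresponding step in \cite{MU}. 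The argument should run as follows: if the image of a leaf were a proper open convex subset $\Omega\subsetneq P(t)$, one connects a singular point to a boundary point of $\Omega$ by a straight segment, which has finite $ds^2$-length and hence finite $(ds^2+T)$-length (its tail lies outside $\operatorname{supp}T$); completeness of $ds^2+T$ then makes this path converge in $M$, and analysing the ruling through the limit point, where $\nu$ must agree with $\nu(t)$ by continuity, yields a contradiction.

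Granting these two points, the theorem follows at once when $n\ge3$. Take any $p_*\in S(f)$; its ruling $P(t_*)$ is a complete affine $(n-1)$-plane, and $S(f)$ restricted to that leaf is the affine subspace $\{\rho(t_*,\cdot)=0\}$ of $P(t_*)$, which is non-empty (it contains $p_*$) and therefore of dimension $\ge n-2\ge1$. Since $f$ restricted to the leaf is an affine embedding, $f(S(f))$ contains an affine subspace of $\R^{n+1}$ of dimension $\ge1$, hence is unbounded; but $f(S(f))$ is the continuous image of the compact set $S(f)$, hence bounded --- a contradiction. Therefore $S(f)=\emptyset$. For $n=2$ the argument degenerates exactly where one would expect: the subspace $\{\rho(t_*,\cdot)=0\}$ is then a single point, so $f(S(f))$ may perfectly well be a compact curve, consistent with Fact~\ref{fact:MU}.
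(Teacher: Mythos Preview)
Your overall strategy coincides with the paper's: pass to ruling coordinates near a (necessarily non-umbilic) singular point, observe that the singular locus inside each ruling is cut out by a single affine equation $\rho(t_*,\cdot)=0$ and hence has dimension $n-1$ or $n-2$, and contradict the compactness of $S(f)$ once the ruling through the singular point is known to be a full copy of $\R^{n-1}$. The final dimension count and the remark on why the argument collapses for $n=2$ are exactly the paper's.

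The genuine gap is where you flagged it, in the argument that each ruling leaf is complete. Your claim that ``its tail lies outside $\operatorname{supp}T$'' is unjustified: a divergent ray in the leaf could keep returning to the compact set $\operatorname{supp}T$, and nothing you wrote bounds its $(ds^2+T)$-length in that case. Even if one produces a limit point $q\in M$ by compactness, the contradiction is not immediate, since $q$ could a priori be a \emph{regular} umbilic point (a flat point with $d\nu_q=0$), at which the ruling foliation does not extend; your sketch does not exclude this. (A minor side issue: the image of a leaf is open and connected but need not be convex.) The paper sidesteps all of this by working with the lift metric rather than with $ds^2+T$: it invokes Fact~\ref{fact:MU_complete} (completeness $\Rightarrow$ weak completeness), computes $ds^2_\#=ds^2+d\nu\cdot d\nu$ explicitly in the ruling coordinates, and checks that each $w_j$-curve is a geodesic of $ds^2_\#$; completeness of $ds^2_\#$ then forces every $w_j$-direction to extend over all of $\R$, so the coordinate patch is $I\times\R^{n-1}$. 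Replacing your second paragraph with this lift-metric argument makes the proof go through exactly along the lines you outlined.
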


Combining Hartman--Nirenberg's theorem \cite{HN}, 
Murata--Umehara's theorem \cite{MU}
and Theorem \ref{thm:main}, 
we have the classification of complete flat fronts in $\R^{n+1}$.

\begin{table}[htb]
\begin{center}
\begin{tabular}{|c||c|c|}\hline
   & Singular set $=\emptyset$ & Singular set $\neq\emptyset$ \\
   \hline\hline
  $n=2$ & Cylinder & $\exists \infty$ (Murata--Umehara \cite{MU}) \\
  \innerrule &  & \innerrule \\
  $n\geq3$ & (Hartman--Nirenberg \cite{HN}) & $\not\exists$ (Theorem \ref{thm:main}) \\
  \hline
\end{tabular}
\caption{Classification of complete flat fronts in $\R^{n+1}$.}
\end{center}
\end{table}

\vspace{-4mm}

We remark that, 
although there do not exist 
any complete flat fronts in $\R^{n+1}$ $(n\geq3)$,
there are many {\it weakly complete} ones.
For example, 
we can construct a weakly complete flat front 
by a pair $(\gamma(t), a(t))$
of a complete regular curve $\gamma(t)$ in $S^n$
and a smooth function $a(t)$ on $\R$ 
(cf.~Proposition \ref{prop:representation}).
Here, we denote by $S^n$ the $n$-sphere 
of constant sectional curvature $1$.
Moreover, by a regular curve in $\R^{n+1}$,
one may construct a flat front called 
{\it tangent developable}.
(See \cite{Ishikawa} for more details
and properties of singularities of tangent developables.)

We also remark that there are several works
related to Murata--Umehara's theorem.
Naokawa \cite{Naokawa} gave
an estimation of singular points other than cuspidal edges 
on asymptotic completions of developable M\"{o}bius strips.
On the other hand, flat fronts 
can be considered as fronts with one principal curvature zero.
In a previous paper \cite{Honda2}, 
the author gave a classification of weakly complete 
fronts with one principal curvature non-zero constant.

With respect to the case of non-flat ambient spaces,
it is known that
flat fronts in $\R^{n+1}$ is identified with 
fronts of constant sectional curvature $1$ 
(CSC-$1$ fronts) in $S^{n+1}$
via the central projection of a hemisphere to a tangent space. 
Therefore, the local nature of flat fronts in $\R^{n+1}$
is the same as that of CSC-$1$ fronts in $S^{n+1}$.
However, they may display different global properties.
In \cite{Honda3}, the author gave a classification of 
complete CSC-$1$ fronts, which is a generalization of 
O'Neill-Stiel's theorem \cite{OS}.
In particular, in the case of $n\geq3$, 
there exist many non-trivial complete CSC-$1$ fronts in $S^{n+1}$,
although there do not exist any complete flat fronts other than cylinders in $\R^{n+1}$.
(See also \cite{Honda1} for the case of negative sectional curvature.)

This paper is organized as follows.
In Section \ref{sec:prelim}, 
we shall review the definition and fundamental properties 
of flat fronts.
Using them, we shall prove Theorem \ref{thm:main} in Section \ref{sec:proof}.

\section{Preliminaries}
\label{sec:prelim}

We denote by $\R^{n+1}$ the Euclidean $(n+1)$-space,
and $S^n$ the unit sphere 
$S^n:=\{ \vect{x}\in \R^{n+1}\,;\, \vect{x}\cdot \vect{x}=1 \},$
where the dot `$\cdot$' is the canonical inner product on $\R^{n+1}$.
Let $M^n$ be a connected smooth $n$-manifold
and 
$$f:M^n\rightarrow \R^{n+1}$$ a smooth map.
A point $p\in M^n$ is called a \emph{singular point} 
if $f$ is not an immersion at $p$.
Otherwise, we say $p$ a \emph{regular point}.
Denote by $S(f)$ $(\subset M^n)$ 
be the set of singular points.
If $S(f)$ is empty, we call $f$ a (regular) hypersurface.

A smooth map $f:M^n\rightarrow \R^{n+1}$
is called a \emph{frontal\/}, if for each point $p\in M^n$,
there exist a neighborhood $U$ of $p$ and 
a smooth map $\nu : U \rightarrow S^{n}$
such that 
$df_q(\vect{v})\cdot\nu(q)=0$ 
holds for each $q\in U$ and $\vect{v} \in T_qM^n$.
Such a $\nu$ is called the unit normal vector field or the Gauss map of $f$.
If $\nu$ can be defined throughout $M^n$,
$f$ is called {\it co-orientable}.
On the other hand, we say {\it orientable} if $M^n$ is orientable.
If 
$$
  (L:=)~(f,\nu) : U\rightarrow \R^{n+1}\times S^n
$$
 gives an immersion,
$f$ is called a \emph{wave front\/} (or a \emph{front\/}, for short).
The map $L=(f,\nu)$ is called the Legendrian lift of $f$.

\subsection{Completeness, Weak completeness, Umbilic points}
The first fundamental form (i.e., the induced metric)
is given by $ds^2:=df\cdot df$.
For a front $f : M^n \rightarrow \R^{n+1}$
with a (possibly locally defined) unit normal vector field $\nu$,
$$
ds^2_\#:=ds^2+d\nu\cdot d\nu
$$
gives a positive definite Riemannian metric,
and called the lift metric.
If the lift metric $ds^2_\#$ is complete, 
$f$ is called {\it weakly complete}.
On the other hand,
$f$ is called {\it complete},
if there exists a symmetric covariant $(0,2)$-tensor $T$ on $M^n$
with compact support such that
$ds^2+T$ gives a complete metric on $M^n$.
In this case, the singular set $S(f)$ must be compact.
As noted in \cite{MU}, if $S(f)$ is empty, 
then $f : M^n\rightarrow \R^{n+1}$ is complete as a front if and only if 
$f$ is complete as a regular hypersurface
(i.e., $(M^n, ds^2)$ is a complete Riemannian manifold).

\begin{fact}[{\cite[Lemma 4.1]{MU}}]
\label{fact:MU_complete}
A complete front is weakly complete.
\end{fact}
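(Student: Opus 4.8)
The plan is to exploit the pointwise inequality $ds^2_\# = ds^2 + d\nu\cdot d\nu \geq ds^2$, which holds because $d\nu\cdot d\nu$ is positive semi-definite, together with the fact that the difference between the complete metric $ds^2+T$ and $ds^2$ is supported on a compact set. First I would recall that, since $f$ is a front (not merely a frontal), the lift metric $ds^2_\#$ is a genuine smooth positive-definite Riemannian metric on all of $M^n$: it is the pullback by the Legendrian lift $L=(f,\nu)$ of the product metric on $\R^{n+1}\times S^n$, and it is independent of the (possibly only local) choice of sign of $\nu$. Let $T$ be a symmetric $(0,2)$-tensor with compact support $K:=\operatorname{supp} T$ for which $g:=ds^2+T$ is a complete Riemannian metric, as provided by the completeness of $f$.

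Next I would compare $ds^2_\#$ with $g$ on the two pieces $M^n\setminus K$ and $K$. On $M^n\setminus K$ we have $T\equiv 0$, hence $g=ds^2\leq ds^2_\#$ there. On the compact set $K$, both $g$ and $ds^2_\#$ are continuous and positive definite, so a routine compactness argument provides a constant $C\in(0,1]$ with $ds^2_\#\geq C\,g$ on $K$. Combining the two estimates yields the global comparison $ds^2_\#\geq C\,g$ on $M^n$.

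Finally I would deduce the completeness of $ds^2_\#$ from this comparison and the completeness of $g$, using the characterization of metric completeness by divergent curves: if $ds^2_\#$ were incomplete there would be a divergent path $\gamma\colon[0,1)\to M^n$ (one leaving every compact subset of $M^n$) of finite $ds^2_\#$-length, but then $L_g(\gamma)\leq C^{-1/2}\,L_{ds^2_\#}(\gamma)<\infty$, so $\gamma$ would be a divergent path of finite $g$-length, contradicting the completeness of $g$. Hence $ds^2_\#$ is complete, that is, $f$ is weakly complete.

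I do not expect a serious obstacle here; the argument is a soft metric comparison. The one point that genuinely uses the front structure, and the one place where a naive attempt fails, is that $ds^2$ degenerates along the singular set $S(f)$, so one cannot simply compare $ds^2_\#$ with $ds^2$ and be finished: the compactly supported corrector $T$ is exactly what restores positive-definiteness, while its compact support guarantees that the correction does not affect the metric outside a compact set, so that the completeness of $ds^2+T$ still transfers to $ds^2_\#$. A minor technical point is the invocation of the divergent-curve criterion for metric completeness, which converts the pointwise inequality between metrics into a statement about completeness.
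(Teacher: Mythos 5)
Your argument is correct. Note that the paper itself gives no proof of this statement --- it is imported as a Fact from Murata--Umehara \cite[Lemma 4.1]{MU} --- but your comparison argument (the pointwise inequality $ds^2_\#\geq ds^2$ off $\operatorname{supp}T$, a uniform constant $C$ on the compact support obtained from the unit sphere bundle, and the divergent-curve criterion for completeness) is the standard proof and is essentially the one given in \cite{MU}.
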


A point $p\in M^n$ is called an {\it umbilic point},
if there exist real numbers $\delta_1$, $\delta_2$ 
such that 
$\delta_1(df)_p=\delta_2(d\nu)_p,$
$(\delta_1,\delta_2)\neq(0,0)$
hold.
For a positive number $\delta>0$, set
\begin{equation}\label{eq:parallel}
  f^{\delta}:= f+\delta \nu,\qquad
  \nu^{\delta}:=\nu.
\end{equation}
Then we can check that 
$f^{\delta}$ is a front 
and $\nu^{\delta}$ gives a unit normal along $f^{\delta}$.
Such an $f^{\delta}$ is called the {\it parallel front} of $f$.
Umbilic points are common in its parallel family.

\begin{fact}[{\cite[Lemma 2.7]{Honda3}}]
\label{lem:singular-umb}
Let $p\in M^n$ be a singular point
of a front $f$.
Then, $p$ is an umbilic point if and only if ${\rm rank}(df)_p=0$ holds.
In this case, we have ${\rm rank}(d\nu)_p=n$.
\end{fact}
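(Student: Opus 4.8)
The plan is to reduce everything to the single structural fact that defines a front, namely that the Legendrian lift $L=(f,\nu)$ is an immersion. Since $(dL)_p(\vect{v})=\big((df)_p(\vect{v}),(d\nu)_p(\vect{v})\big)$, injectivity of $(dL)_p$ is equivalent to
\[
  \ker(df)_p\cap\ker(d\nu)_p=\{0\}.
\]
I would record this equivalence at the outset, as it is the only place where the front hypothesis is used. Everything else is linear algebra on the two differentials $(df)_p$ and $(d\nu)_p$ at the single point $p$.

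For the direction ``$\operatorname{rank}(df)_p=0$ implies umbilic'', suppose $(df)_p=0$. Then the umbilic relation $\delta_1(df)_p=\delta_2(d\nu)_p$ is satisfied by the choice $(\delta_1,\delta_2)=(1,0)\neq(0,0)$, so $p$ is umbilic. This costs nothing and does not even invoke that $p$ is singular.

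For the converse, assume $p$ is a singular point (so $\operatorname{rank}(df)_p<n$) and umbilic, witnessed by $(\delta_1,\delta_2)\neq(0,0)$ with $\delta_1(df)_p=\delta_2(d\nu)_p$. I would split on whether $\delta_2=0$. If $\delta_2=0$, then $\delta_1\neq0$ forces $(df)_p=0$, which is the desired conclusion. The substantive case is $\delta_2\neq0$: here $(d\nu)_p=c\,(df)_p$ with $c:=\delta_1/\delta_2$, so every $\vect{v}\in\ker(df)_p$ also lies in $\ker(d\nu)_p$; hence $\ker(df)_p\subseteq\ker(d\nu)_p$ and therefore $\ker(df)_p\cap\ker(d\nu)_p=\ker(df)_p$. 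The front condition forces this intersection to be $\{0\}$, so $(df)_p$ is injective and $\operatorname{rank}(df)_p=n$, contradicting that $p$ is singular. Thus $\delta_2=0$ and $(df)_p=0$, completing the equivalence. For the final assertion, under $p$ singular and umbilic we have just shown $(df)_p=0$, i.e.\ $\ker(df)_p=T_pM^n$; intersecting with the front condition gives $\ker(d\nu)_p=\ker(df)_p\cap\ker(d\nu)_p=\{0\}$, so $(d\nu)_p$ is injective and $\operatorname{rank}(d\nu)_p=n$.

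I do not anticipate a genuine obstacle: the argument is purely pointwise. The only points requiring care are the bookkeeping of the degenerate parameter value $\delta_2=0$ against $\delta_2\neq0$, and the recognition that the immersion hypothesis on $L$ is precisely what excludes a proportional-but-nonvanishing pair $\big((df)_p,(d\nu)_p\big)$ at a singular point.
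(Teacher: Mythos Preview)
Your argument is correct. Note, however, that the paper does not supply its own proof of this statement: it is recorded as a \emph{Fact} and attributed to \cite[Lemma~2.7]{Honda3}, so there is nothing here to compare your approach against. Your reduction to the kernel condition $\ker(df)_p\cap\ker(d\nu)_p=\{0\}$ encoded by the immersivity of the Legendrian lift, followed by the case split on $\delta_2$, is the natural and standard route; it is essentially the argument one would expect to find in the cited reference.
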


\subsection{Flat fronts}

In \cite{SUY0, SUY1}, Saji--Umehara--Yamada 
introduced {\it coherent tangent bundles\/}%
\footnote{It is known that, in general, coherent tangent bundles can be constructed
only from positive semi-definite metrics called {\it Kossowski metrics}
(cf.~\cite{HHNSUY, SUY3}).},
which is a generalized notion of Riemannian manifolds.

Let $\mathcal{E}$ be a vector bundle of rank $n$
over a smooth $n$-manifold $M^n$.
We equip a fiber metric $\inner{~}{~}$ on $\mathcal{E}$
and a metric connection $D$ on $(\mathcal{E},\inner{~}{~})$.
Let $\varphi : TM^n \rightarrow \mathcal{E}$
be a bundle homomorphism such that
\begin{equation}\label{eq:Codazzi}
  D_{X}\varphi(Y)-D_{Y}\varphi(X)-\varphi([X,Y])=0
\end{equation}
holds for arbitrary smooth vector fields $X$, $Y$ on $M^n$.
Then 
$\mathcal{E}=(\mathcal{E},\,\inner{~}{~},\,D,\,\varphi)$
is called a {\it coherent tangent bundle} over $M^n$.

We shall review the coherent tangent bundles 
induced from frontals
(cf.~ \cite[Example 2.4]{SUY1}).
For a frontal $f : M^n\rightarrow \R^{n+1}$,
set 
$\mathcal{E}_f$, $\inner{~}{~}_f$, $D_f$ and $\varphi_f$,
respectively, as follows:
\begin{itemize}
\item
 $\mathcal{E}_f$ is the subbundle of the pull-back bundle $f^*\R^{n+1}$ 
 perpendicular to $\nu$,
\item
 $\inner{~}{~}_f$ is the metric on $\mathcal{E}_f$ induced from 
 the canonical metric on $\R^{n+1}$,
\item
 $D_f$ is the tangential part 
 of the Levi-Civita connection on $\R^{n+1}$,
\item
 $\varphi_f : TM^n \rightarrow \mathcal{E}_f$ 
 defined as $\varphi_f(X):=df(X)$.
\end{itemize}
Then, $\mathcal{E}_f=(\mathcal{E}_f,\inner{~}{~}_f,D_f,\varphi_f)$
is a coherent tangent bundle,
which we call the \emph{induced coherent tangent bundle}.

\begin{definition}[\cite{Honda3}]
A coherent tangent bundle 
is said to be {\it flat}
if $$R^D(X,Y)\xi=0$$ holds
for all smooth vector fields $X$, $Y$ on $M^n$
and each smooth section $\xi$ of $\mathcal{E}$, 
where $R^D$ is the curvature tensor of the connection $D$ given by
\[
  R^D(X,Y)\xi:=D_XD_Y\xi-D_YD_X\xi-D_{[X,Y]}\xi.
\]
A frontal $f$ is called {\it flat},
if the induced coherent tangent bundle $\mathcal{E}_f$ 
is flat.
\end{definition}

In \cite{Honda3},
the following characterization of flatness was proved 
by using the Gauss equation for frontals
given by Saji--Umehara--Yamada \cite[Proposition 2.4]{SUY2}.

\begin{fact}[{\cite[Lemma 3.3]{Honda3}}]
\label{lem:degenerate-nu}
Let $f : M^n\rightarrow \R^{n+1}$ 
be a frontal with a unit normal vector field $\nu$.
Then $f$ is flat if and only if 
\begin{equation}\label{eq:d-nu}
   {\rm rank}(d\nu)\leq1
\end{equation}
holds on $M^n$.
\end{fact}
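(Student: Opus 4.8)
The plan is to reduce the flatness condition to a Gauss-type equation for the induced coherent tangent bundle and then extract the rank bound by pointwise linear algebra. Write $D:=D_f$ for the induced connection and let $\nabla$ denote the flat connection on $\R^{n+1}$. For a section $\xi$ of $\mathcal{E}_f$ (so $\xi$ is $\R^{n+1}$-valued with $\xi\cdot\nu=0$) and a vector field $X$, differentiating $\xi\cdot\nu=0$ gives $(\nabla_X\xi)\cdot\nu=-\xi\cdot d\nu(X)$, while $\nu\cdot\nu=1$ gives $d\nu(X)\cdot\nu=0$, so $d\nu(X)$ is again a section of $\mathcal{E}_f$. Since $D$ is the tangential part of $\nabla$, this yields the Gauss formula
\[
   \nabla_X\xi=D_X\xi-\inner{\xi}{d\nu(X)}_f\,\nu .
\]
First I would substitute this into $R^\nabla(X,Y)\xi=0$, which holds because $\R^{n+1}$ is flat, and decompose the outcome into its $\mathcal{E}_f$-component and its $\nu$-component.

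Using that $D$ is compatible with $\inner{~}{~}_f$, the $\nu$-component collapses to $\inner{\xi}{\nabla_Y d\nu(X)-\nabla_X d\nu(Y)+d\nu([X,Y])}_f$, which vanishes identically because it is the symmetry of the second derivative of the map $\nu$ (equality of mixed partials $\partial_a\partial_b\nu=\partial_b\partial_a\nu$ in any local chart). Hence only the $\mathcal{E}_f$-component survives, and it gives the Gauss equation
\[
   R^{D}(X,Y)\xi=\inner{\xi}{d\nu(Y)}_f\,d\nu(X)-\inner{\xi}{d\nu(X)}_f\,d\nu(Y),
\]
which is precisely \cite[Proposition 2.4]{SUY2}. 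Thus $f$ is flat, i.e.\ $R^{D}\equiv0$, if and only if the right-hand side vanishes for all vector fields $X,Y$ and all sections $\xi$.

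It then remains to prove, at each point $p$, that this right-hand side vanishes for every $\xi$ exactly when $\mathrm{rank}(d\nu)_p\le1$. For the ``if'' direction, if $\mathrm{rank}(d\nu)_p\le1$ then every $d\nu(X)_p$ is a scalar multiple $\lambda(X)\,e$ of a single vector $e\in(\mathcal{E}_f)_p$, so the right-hand side becomes $\lambda(X)\lambda(Y)\bigl(\inner{\xi}{e}_f\,e-\inner{\xi}{e}_f\,e\bigr)=0$. For the ``only if'' direction I would argue by contraposition: if $\mathrm{rank}(d\nu)_p\ge2$, choose $X,Y$ with $u:=d\nu(X)_p$ and $w:=d\nu(Y)_p$ linearly independent, and evaluate the right-hand side at $\xi_p=u$, obtaining $\inner{u}{w}_f\,u-\inner{u}{u}_f\,w$. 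Linear independence of $u$ and $w$ forces both coefficients to vanish, in particular $\inner{u}{u}_f=0$; since $\inner{~}{~}_f$ is positive definite this gives $u=0$, contradicting independence. Therefore $R^{D}\equiv0$ is equivalent to $\mathrm{rank}(d\nu)\le1$ on $M^n$.

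I expect the main obstacle to be the careful verification that the $\nu$-component of $R^\nabla(X,Y)\xi$ vanishes, since this is where the frontal nature matters: the bundle map $\varphi_f=df$ may drop rank at singular points, but $D$, $d\nu$, and the Codazzi-type identity $\nabla_X d\nu(Y)-\nabla_Y d\nu(X)-d\nu([X,Y])=0$ are all defined and valid on the whole of $M^n$, so the computation proceeds uniformly across regular and singular points. Once the Gauss equation is in hand the remaining equivalence is elementary, resting only on the positive-definiteness of the fiber metric.
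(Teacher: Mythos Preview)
Your argument is correct. The paper itself does not prove this statement; it is quoted as a Fact from \cite{Honda3}, with the sole indication that the proof there proceeds ``by using the Gauss equation for frontals given by Saji--Umehara--Yamada \cite[Proposition 2.4]{SUY2}.'' Your derivation does exactly that: you obtain the Gauss equation $R^{D}(X,Y)\xi=\inner{\xi}{d\nu(Y)}_f\,d\nu(X)-\inner{\xi}{d\nu(X)}_f\,d\nu(Y)$ directly from the decomposition $\nabla_X\xi=D_X\xi-\inner{\xi}{d\nu(X)}_f\,\nu$ and the flatness of $\nabla$, and then reduce $R^D\equiv0$ to the rank condition by elementary linear algebra. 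This matches the route the paper points to, so there is nothing further to compare.
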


We remark that
Murata--Umehara \cite{MU} defined 
the flatness for frontals in $\R^3$
by the condition \eqref{eq:d-nu}.
Therefore, our definition of flatness is compatible to 
that given by Murata--Umehara.

\section{Proof of Theorem \ref{thm:main}}
\label{sec:proof}

Denote by $\mathcal{U}_f$ the set of umbilic points.

\begin{lemma}\label{lem:sp_coord}
Let $f : M^n \rightarrow \R^{n+1}$ be a 
non-totally-umbilic flat front.
For each non-umbilic point
$q\in M^n\setminus\mathcal{U}_f$,
there exist a local coordinate neighborhood
$(U\,;\,u_1,\dots,u_n)$ of $q$
and a smooth function $\rho=\rho(u_1,\dots,u_n)$ on $U$ such that
\begin{equation}\label{eq:sp_coord}
  -\rho \nu_{u_1}=f_{u_1},\quad
  \nu_{u_j}=0,\quad
  \nu_{u_1}\cdot f_{u_j}=0
\end{equation}
hold for each $j=2,\dots, n$, 
and $\{\nu_{u_1}, f_{u_2},\dots, f_{u_n}\}$ 
is a frame on $U$.
For each $u_1$, set the slice $U_{u_1}$ of $U$ as
$
U_{u_1}:=\{\vect{u} \in \R^{n-1} \,;\,
 (u_1,\vect{u})\in U\}.
$
Then, the restriction $f|_{U_{u_1}} : U_{u_1} \rightarrow \R^{n+1}$ is 
a totally geodesic embedding for each $u_1$.
\end{lemma}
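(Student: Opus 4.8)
The plan is to build the coordinate system by starting from the rank-one structure of $d\nu$ guaranteed by Fact \ref{lem:degenerate-nu}. Since $f$ is flat, ${\rm rank}(d\nu)\le 1$ everywhere; at a non-umbilic singular point Fact \ref{lem:singular-umb} forbids ${\rm rank}(df)_q=0$, and at a regular point ${\rm rank}(df)_q=n$, so in either case combined with the front condition (the Legendrian lift $(f,\nu)$ is an immersion) one checks that ${\rm rank}(d\nu)_q=1$ precisely on $M^n\setminus\mathcal U_f$. Hence near $q$ the kernel of $d\nu$ is a smooth distribution of rank $n-1$, and the image of $d\nu$ is a smooth line bundle. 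First I would choose coordinates $(u_1,\dots,u_n)$ on a neighborhood $U$ of $q$ so that $\partial_{u_2},\dots,\partial_{u_n}$ span $\ker d\nu$; this is possible because $\ker d\nu$ is integrable for purely dimensional reasons when it has corank one (any corank-one distribution that is the kernel of a closed-up-to-scale form — here $d\nu$ has rank one, so its kernel is a codimension-one distribution and we may Frobenius-integrate it). That gives $\nu_{u_j}=0$ for $j\ge 2$ immediately.

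Next I would exploit the front condition to control $f_{u_1}$. Because $\{\partial_{u_1},\dots,\partial_{u_n}\}$ spans $TU$ and $(df,d\nu)$ is injective, while $d\nu$ kills $\partial_{u_2},\dots,\partial_{u_n}$, the vectors $f_{u_2},\dots,f_{u_n}$ must be linearly independent (otherwise the Legendrian lift degenerates in those directions). Also $\nu_{u_1}\neq0$ on $U$ since ${\rm rank}(d\nu)=1$. Differentiating $df\cdot\nu=0$ gives $f_{u_i}\cdot\nu_{u_k}+f\cdot\nu_{u_iu_k}=\dots$; more directly, differentiating $\nu\cdot\nu=1$ gives $\nu\cdot\nu_{u_1}=0$, and differentiating $f_{u_j}\cdot\nu=0$ in $u_1$ together with $\nu_{u_j}=0$ and symmetry $f_{u_ju_1}=f_{u_1u_j}$ yields $f_{u_1}\cdot\nu_{u_j}=0$ trivially and, after using $\nu_{u_j}=0$, the relation $\partial_{u_1}(f_{u_j}\cdot\nu)=f_{u_ju_1}\cdot\nu=0$, hence $f_{u_1u_j}\perp\nu$. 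The key identity $\nu_{u_1}\cdot f_{u_j}=0$ for $j\ge2$ comes from differentiating $f_{u_1}\cdot\nu=0$ in $u_j$: $f_{u_1u_j}\cdot\nu+f_{u_1}\cdot\nu_{u_j}=0$, and since $\nu_{u_j}=0$ this gives $f_{u_1u_j}\cdot\nu=0$ again — so instead I differentiate $f_{u_j}\cdot\nu=0$ in $u_1$ to get $f_{u_ju_1}\cdot\nu=0$, and then differentiate $f_{u_1}\cdot\nu_{u_1}$ cleverly; the honest route is: $0=\partial_{u_j}(f_{u_1}\cdot\nu)=f_{u_1u_j}\cdot\nu$ and $0=\partial_{u_1}(f_{u_j}\cdot\nu)=f_{u_1u_j}\cdot\nu+f_{u_j}\cdot\nu_{u_1}$, so subtracting gives $f_{u_j}\cdot\nu_{u_1}=0$. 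That is the orthogonality claim. The proportionality $-\rho\,\nu_{u_1}=f_{u_1}$ then follows because both $f_{u_1}$ and $\nu_{u_1}$ lie in the orthogonal complement of $\operatorname{span}\{f_{u_2},\dots,f_{u_n}\}$ inside $\nu^\perp$, which is a line; defining $\rho$ as the resulting scalar (smooth since $\nu_{u_1}\neq0$) completes \eqref{eq:sp_coord}, and $\{\nu_{u_1},f_{u_2},\dots,f_{u_n}\}$ is then a frame on $U$ (after possibly shrinking $U$).

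For the final assertion, I would show the slice $f|_{U_{u_1}}$ is a totally geodesic embedding. Embeddedness (after shrinking) is immediate since $f_{u_2},\dots,f_{u_n}$ are independent. For total geodesy it suffices to show the second fundamental form of the slice in $\R^{n+1}$ vanishes, i.e. $f_{u_iu_j}$ has no component normal to the slice for $i,j\ge2$. The normal space of the slice in $\R^{n+1}$ is spanned by $\nu$ and $\nu_{u_1}$ (equivalently $f_{u_1}$). We have $f_{u_iu_j}\cdot\nu=-f_{u_i}\cdot\nu_{u_j}=0$ since $\nu_{u_j}=0$, and $f_{u_iu_j}\cdot\nu_{u_1}=\partial_{u_j}(f_{u_i}\cdot\nu_{u_1})-f_{u_i}\cdot\nu_{u_1u_j}$; the first term is $\partial_{u_j}(0)=0$ by the orthogonality just proved, and for the second, differentiate $\nu_{u_j}=0$ in $u_1$ to get $\nu_{u_ju_1}=0$, hence $\nu_{u_1u_j}=0$ and the second term vanishes too. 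Therefore $f_{u_iu_j}$ is tangent to the slice, so each slice is totally geodesic, hence an affine $(n-1)$-plane in $\R^{n+1}$.

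I expect the main obstacle to be the careful bookkeeping at the start: verifying that $\ker d\nu$ is a genuine smooth rank-$(n-1)$ distribution on a full neighborhood of $q$ (not just at $q$) and that it is integrable, so that straightening coordinates exist — this uses both the flatness hypothesis (rank $\le1$) and the front/non-umbilic hypotheses (rank $\ge1$, via Facts \ref{lem:singular-umb} and the immersion property of the Legendrian lift) working in tandem, and it is the step where the front condition is genuinely essential rather than cosmetic.
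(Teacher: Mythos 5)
Your derivation of $\nu_{u_j}=0$, of the linear independence of $\{\nu_{u_1},f_{u_2},\dots,f_{u_n}\}$ via the Legendrian lift, of the identity $\nu_{u_1}\cdot f_{u_j}=0$ (by the symmetry trick $0=\partial_{u_1}(f_{u_j}\cdot\nu)=f_{u_1u_j}\cdot\nu+f_{u_j}\cdot\nu_{u_1}$), and of the totally geodesic slices are all sound. But there is a genuine gap at the key step $-\rho\,\nu_{u_1}=f_{u_1}$. You claim that $f_{u_1}$ lies in the orthogonal complement of $\mathrm{span}\{f_{u_2},\dots,f_{u_n}\}$ inside $\nu^\perp$, but nothing in your construction forces $f_{u_1}\cdot f_{u_j}=0$: you chose $(u_2,\dots,u_n)$ only so that $\partial_{u_2},\dots,\partial_{u_n}$ span $\ker d\nu$, and $u_1$ as an arbitrary transversal coordinate. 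A shear $w_j\mapsto w_j+c_j(u_1)$ preserves all your normalizations ($\nu_{u_j}=0$, $\nu_{u_1}\cdot f_{u_j}=0$) yet adds $\sum_j c_j'(u_1)f_{u_j}$ to $f_{u_1}$, destroying the proportionality with $\nu_{u_1}$. Note that the lemma's conclusion forces $f_{u_1}\cdot f_{u_j}=-\rho\,\nu_{u_1}\cdot f_{u_j}=0$, so this orthogonality is a genuine additional normalization of the coordinates that must be arranged, not deduced. One cannot simply Gram--Schmidt against $ds^2$ either, since $f_{u_1}$ vanishes on $S(f)$. The paper sidesteps this by passing to the parallel front $f^\delta=f+\delta\nu$, which is an umbilic-free flat \emph{immersion} near the singular point, taking genuine curvature-line coordinates for $f^\delta$ (which build in both $(f^\delta)_{u_1}\perp(f^\delta)_{u_j}$ and the principal-direction relation $-(\nu)_{u_1}=\alpha(f^\delta)_{u_1}$), and then substituting $f=f^\delta-\delta\nu$ to get $\rho=(1+\delta\alpha)/\alpha$. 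Your route could be repaired by replacing $\partial_{u_1}$ with the vector field $X=\partial_{u_1}-\sum_{j,k\ge2}g^{jk}(f_{u_1}\cdot f_{u_k})\partial_{u_j}$ (where $g_{jk}=f_{u_j}\cdot f_{u_k}$ is invertible by your independence claim) and re-straightening, but that step is exactly what is missing.

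A secondary point: your justification that $\ker d\nu$ is integrable ``for purely dimensional reasons'' because it has corank one is not a valid general principle (contact distributions are corank-one and nowhere integrable). What makes it work here is that $d\nu$ has \emph{constant} rank $1$ on $M^n\setminus\mathcal{U}_f$, so $\ker d\nu$ is the tangent distribution to the fibers of $\nu$ by the constant rank theorem; you should say that instead.
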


\begin{proof}
Since $f$ is flat, 
Lemma \ref{lem:degenerate-nu} implies that
there exists a local coordinate system
$(V\,;\,v_1,\dots,v_n)$ around 
$q\in M^n\setminus\mathcal{U}_f$ such that 
$\nu_{v_j}=0$
for each $j=2,\dots, n$.
If $f_{v_j}=0$ for some $j=2,\dots, n$, 
then $L_{v_j}=(f_{v_j},\, \nu_{v_j})=(0,0)$ holds,
which contradicts the condition that $f$ is a front. 
Therefore, 
$f_{v_j}\neq0$ for each $j=2,\dots, n$.
Hence $p\in V$ is a singular point 
if and only if $f_{v_1}(p)=0$, and then 
$\{\nu_{v_1},\, f_{v_2},\dots,\, f_{v_n}\}$ is linearly independent.
In this case, for each $\delta \neq0$,
the parallel front $f^{\delta}:=f+\delta\,\nu$  is 
a flat immersion around $p$
(cf.\ \eqref{eq:parallel}).
Since $f$ is umbilic-free, so is $f^{\delta}$ for each $\delta \neq0$.
Let $(U\,;\,u_1,\dots,u_n)$ be a curvature line coordinate system of 
$f^{\delta}$ around $q\in M^n\setminus\mathcal{U}_f$.
That is, 
\begin{equation}\label{eq:sp_coord_parallel}
  (\nu^{\delta})_{u_j}=0,
  \quad
  (f^{\delta})_{u_1}\cdot(f^{\delta})_{u_j}=0,
  \quad
  -(\nu^{\delta})_{u_1} = \alpha(f^{\delta})_{u_1}
\end{equation}
hold, where $j=2,\dots, n$,
and $\alpha=\alpha(u_1,\dots,u_n)$
is a smooth function on $U$.
In this case, 
the principal curvatures 
$\lambda_1^\delta,\dots,\lambda_n^\delta$
of $f^{\delta}$ are given by
$\lambda_1^\delta = \alpha$, $\lambda_j^\delta=0$
$(j=2,\dots,n)$.
Since $f^{\delta}$ is umbilic-free,
$\alpha\neq 0$ on $U$.
Substituting \eqref{eq:parallel}
into equation \eqref{eq:sp_coord_parallel}, 
we may conclude that 
\eqref{eq:sp_coord} holds
with $\rho:=(1+\delta\alpha)/\alpha.$
With respect to the third assertion,
$\vect{n}:=\nu_{u_1}/|\nu_{u_1}|$ gives a unit normal vector field of $f|_{U_{u_1}}$.
Set $\psi:=1/|\nu_{u_1}|$.
Then, for each $j=2,\dots,n$, 
\begin{align*}
  \vect{n}_{u_j}
  = \psi_{u_j}\nu_{u_1}+\psi\nu_{u_1u_j}
  =\psi^{-1} \psi_{u_j} \vect{n}
\end{align*}
and $\vect{n}\cdot \vect{n}_{u_j}=0$ yield
$\vect{n}_{u_j}=0$ on $U_{u_1}$.
Together with $\nu_{u_j}=0$ $(j=2,\dots,n)$ on $U_{u_1}$,
we have the conclusion.
\end{proof}

By Lemma \ref{lem:sp_coord},
since the image of $f|_{U_{u_1}}$ is included in 
a $(n-1)$-dimensional affine subspace $A^{n-1}_{u_1}$ of $\R^{n+1}$
for each $u_1$, 
by a coordinate change of $(u_2,\dots,u_n)$, 
we may take a new coordinate system 
$(U'; u_1, w_2,\dots,w_n)$
such that
$(w_2,\dots,w_n)$ is the canonical Euclidean coordinate system of
$A^{n-1}_{u_1}$ for each ${u_1}$. 
Namely, $f_{w_j}\cdot f_{w_k}=\delta_{jk}$ holds for $j,k=2,\dots,n$.

Setting 
$\sigma(u_1):=f(u_1,0,\dots,0)$ 
and 
$\vect{e}_j(u_1):=f_{w_j}(u_1,0,\dots,0)$ 
for $j=2,\dots,n$, 
we have 
\begin{equation*}
  f(u_1,w_2,\dots,w_n)
  =\sigma(u_1) + w_2 \vect{e}_2(u_1) +\cdots + w_n \vect{e}_n(u_1).
\end{equation*}
Since $f$ has no umbilic point on $U$,
the Gauss map $\nu$ depends only on $u_1$ and $\nu_{u_1}\neq0$ holds.
Therefore, 
$$
\gamma(u_1):=\nu(u_1,0,\dots,0)
$$
is a regular curve in $S^n$.
By a coordinate change of $u_1$, 
we may take a new coordinate system 
$(W; t, w_2,\dots,w_n)$ such that 
the spherical regular curve
$t\mapsto \gamma(t)$ is parametrized by arc-length.
Thus, we have
\begin{equation}\label{eq:representation}
  f(t,w_2,\dots,w_n)=
  \sigma(t) + w_2 \vect{e}_2(t) +\cdots + w_n \vect{e}_n(t).
\end{equation}

Denote by 
$\vect{e}(t):=\gamma'(t)$ the unit tangent vector of $\gamma(t)$.
Since $f_{w_j}=\vect{e}_j$ for each $j=2,\dots, n$ and
$\gamma(t)$ is the Gauss map of $f$,
we have 
\begin{equation}\label{eq:frame-1}
  \gamma(t)\cdot \vect{e}_j(t)=0\qquad (j=2,\dots, n).
\end{equation}
In addition, the third equation of 
\eqref{eq:sp_coord} yields
\begin{equation}\label{eq:frame-2}
  \gamma'(t)\cdot \vect{e}_j(t)=0\qquad (j=2,\dots, n).
\end{equation}
Therefore,
$\{\vect{e}_j(t)\}_{j=2,\dots,n}$
is an orthonormal frame of the normal bundle $\left(\gamma'(t)\right)^\perp$ 
along the spherical regular curve $\gamma(t)$.
Moreover, equations \eqref{eq:frame-1} and \eqref{eq:frame-2} yield
\begin{equation}\label{eq:frame-3}
  \gamma(t)\cdot \vect{e}_j'(t)=0\qquad (j=2,\dots, n).
\end{equation}
Hence, by \eqref{eq:representation},
$f_{t}\cdot \gamma=0$ implies
$\sigma'(t)\cdot \gamma(t)=0 $.
Therefore, there exist smooth functions
$a_j=a_j(t)$ $(j=1,\dots, n)$ such that
\begin{equation}\label{eq:sigma-prime}
  \sigma'(t)=a_1(t) \vect{e}(t)
                         + a_2(t) \vect{e}_2(t) +\cdots 
                         + a_n(t) \vect{e}_n(t).
\end{equation}
Thus, we have the following:

\begin{lemma}\label{lem:representation}
Let $f : M^n \rightarrow \R^{n+1}$ be a 
non-totally-umbilic flat front.
For each non-umbilic point
$q\in M^n\setminus\mathcal{U}_f$,
there exist a local coordinate neighborhood
$(W\,;\, t, w_2,\dots,w_n)$ of $q$,
a regular curve $\gamma(t)$ in $S^n$,
a orthonormal frame $\{\vect{e}_2(t), \dots , \vect{e}_n(t)\}$
of the normal bundle $(\gamma')^{\perp}$ along $\gamma(t)$
and smooth functions $\{a_j(t)\}_{j=1,\dots,n}$
such that $f$ is given by \eqref{eq:representation} on $W$,
where
\begin{equation}\label{eq:sigma}
  \sigma(t):=\int_0^{t} \eta(\tau)\,d\tau\qquad
  \left( \eta(\tau):= a_1(\tau)\vect{e}(\tau)+\sum_{j=2}^n a_j(\tau)\vect{e}_j(\tau) \right)
\end{equation}
and $\vect{e}(t):=\gamma'(t)$.
\end{lemma}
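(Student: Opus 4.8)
The plan is to assemble Lemma~\ref{lem:representation} from Lemma~\ref{lem:sp_coord} together with the elementary identities forced by the front condition. Starting from a non-umbilic point $q$, Lemma~\ref{lem:sp_coord} supplies a coordinate neighborhood $(U;u_1,\dots,u_n)$ in which $\nu$ depends only on $u_1$, $\nu_{u_1}\neq 0$, and each slice $f|_{U_{u_1}}$ is a totally geodesic embedding into an $(n-1)$-dimensional affine subspace $A^{n-1}_{u_1}\subset\R^{n+1}$. The first step is to straighten this family of slices: since the $A^{n-1}_{u_1}$ depend smoothly on $u_1$, one performs a smooth $u_1$-dependent change of $(u_2,\dots,u_n)$ so that the new coordinates $(w_2,\dots,w_n)$ are the canonical Euclidean coordinates on $A^{n-1}_{u_1}$, i.e.\ $f_{w_j}\cdot f_{w_k}=\delta_{jk}$. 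Because each slice is totally geodesic, $f$ is then affine-linear in $(w_2,\dots,w_n)$ and the vectors $f_{w_j}$ are independent of $(w_2,\dots,w_n)$; setting $\sigma(u_1):=f(u_1,0,\dots,0)$ and $\vect{e}_j(u_1):=f_{w_j}(u_1,0,\dots,0)$ gives $f=\sigma(u_1)+\sum_{j=2}^n w_j\,\vect{e}_j(u_1)$.

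The second step normalizes the $u_1$-direction: since $\nu$ is a function of $u_1$ alone with $\nu_{u_1}\neq 0$, the curve $\gamma(u_1):=\nu(u_1,0,\dots,0)$ is a regular curve in $S^n$, and reparametrizing by its arc length yields the parameter $t$ and formula~\eqref{eq:representation} with $\vect{e}(t):=\gamma'(t)$. It remains to identify the frame. Differentiating $\gamma\cdot\gamma=1$ gives $\gamma\cdot\gamma'=0$; since $\gamma$ is the Gauss map of $f$ and $\vect{e}_j=f_{w_j}$ is tangent, $\gamma\cdot\vect{e}_j=0$; and the third relation of \eqref{eq:sp_coord}, $\nu_{u_1}\cdot f_{u_j}=0$, becomes $\gamma'(t)\cdot\vect{e}_j(t)=0$ after the coordinate changes. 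Differentiating $\gamma\cdot\vect{e}_j=0$ and using $\gamma'\cdot\vect{e}_j=0$ gives also $\gamma\cdot\vect{e}_j'=0$. Together with the $\delta_{jk}$-orthonormality, this exhibits $\{\vect{e}_2(t),\dots,\vect{e}_n(t)\}$ as an orthonormal frame of the $(n-1)$-dimensional normal bundle $(\gamma')^\perp$ of $\gamma$ in $S^n$.

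Finally, evaluating the front condition $f_t\cdot\nu=0$ along $w_2=\dots=w_n=0$ gives $\sigma'(t)\cdot\gamma(t)=0$, so $\sigma'(t)$ lies in the span of the orthonormal basis $\{\vect{e}(t),\vect{e}_2(t),\dots,\vect{e}_n(t)\}$ of $\gamma(t)^\perp$; this produces smooth functions $a_1(t),\dots,a_n(t)$ with $\sigma'(t)=a_1(t)\,\vect{e}(t)+\sum_{j=2}^n a_j(t)\,\vect{e}_j(t)$, and integrating from $0$ gives \eqref{eq:sigma}. Almost all of this is bookkeeping of orthogonality relations; the one point that genuinely requires care --- the closest thing to an obstacle --- is the first step, namely verifying that the affine subspaces $A^{n-1}_{u_1}$ (equivalently the $\vect{e}_j$) can be chosen to depend smoothly on $u_1$ and that after straightening them the $f_{w_j}$ become truly $w$-independent. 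This is precisely where one uses that $f|_{U_{u_1}}$ is not merely an embedding into an affine subspace but a \emph{totally geodesic} one, which is the extra content provided by Lemma~\ref{lem:sp_coord}.
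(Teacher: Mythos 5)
Your proposal is correct and follows essentially the same route as the paper: straightening the totally geodesic slices from Lemma~\ref{lem:sp_coord} into Euclidean coordinates $(w_2,\dots,w_n)$ of the affine subspaces $A^{n-1}_{u_1}$, reparametrizing the Gauss map $\gamma$ by arc length, and reading off the orthogonality relations \eqref{eq:frame-1}--\eqref{eq:frame-3} and $\sigma'\cdot\gamma=0$ to obtain the functions $a_j$. You also correctly flag the only delicate point (smooth dependence of the slice straightening on $u_1$ and $w$-independence of $f_{w_j}$), which the paper itself treats with the same brevity.
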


Finally, we shall reduce the numbers of functions.
For a unit speed regular curve 
$\gamma=\gamma(t) : I\rightarrow S^n$
defined on an open interval $I$,
set $\vect{e}(t):=\gamma'(t)$.
Then, there exist an orthonormal frame $\{\vect{e}_j(t)\}_{j=2,\dots,n}$
of the normal bundle along $\gamma$ 
and smooth functions $\mu_j(t)$ $(j=2,\dots,n)$
such that 
\begin{equation}\label{eq:Bishop}
\vect{e}_j'(t)=-\mu_j(t)\vect{e}(t)
\qquad(j=2,\dots,n).
\end{equation}
Such a frame $\{\vect{e}_j(t)\}_{j=2,\dots,n}$
is called the {\it Bishop frame} (cf.~\cite{Bishop}).

Let $f=f(t,w_2,\dots,w_n)$ 
be the flat front given by \eqref{eq:representation}
with the Bishop frame $\{\vect{e}_j(t)\}_{j=2,\dots,n}$.
Set
$$
  \rho(t,w_2,\dots,w_n):=a_1(t)-\sum_{j=2}^nw_j \mu_j(t).
$$
Since
$f_{w_j}=\vect{e}_j(t)$ for $j=1,\dots,n$,
\begin{equation*}
  f_t=\rho(t,w_2,\dots,w_n)\,\vect{e}(t)
          +a_2(t)\vect{e}_2(t)+\cdots+a_n(t)\vect{e}_n(t),
\end{equation*}
and $d\nu\cdot d\nu=dt^2$,
the lift metric $ds^2_\#=df\cdot df+d\nu\cdot d\nu$ is given by
\begin{equation*}
  ds^2_\#=\left( 1+ \rho^2 +\sum_{j=2}^n (a_j(t))^2 \right)dt^2
            +\sum_{j=2}^n \left(2a_j(t)dw_jdt+(dw_j)^2\right).
\end{equation*}
By a straightforward calculation, 
it can be checked that 
each $w_j$-curve $(j=2,\dots,n)$
gives a geodesic of the lift metric $ds^2_\#$.
Thus, if $f$ is weakly complete,
every $w_j$-curve $(j=2,\dots,n)$
can be defined on the whole real line $\R$.
By a coordinate change
$$
(t, w_2,\dots,w_n)\mapsto (t,w_2+b_2(t), \dots ,w_n+b_n(t) )
\quad\left( b_j(t):=-\int_0^t a_j (\tau)d\tau.\right)
$$
we have
\begin{align*}
  f(t,w_2+b_2(t), \dots ,w_n+b_n(t) )
  &= \sigma(t) + \sum_{j=2}^n(w_j+b_j(t))\vect{e}_j(t)\\
  &= \hat\sigma(t) + \sum_{j=2}^n w_j\vect{e}_j(t),
\end{align*}
where we set 
$
  \hat\sigma(t):=\sigma(t) + b_2(t)\vect{e}_2(t)+ \cdots + b_n(t)\vect{e}_n(t).
$
By \eqref{eq:sigma},
we have
$
  \hat\sigma'(t) = a(t) \vect{e}(t),
$
where 
$
  a(t):=a_1(t) - b_2(t)\mu_2(t)- \cdots - b_n(t)\mu_n(t).
$
Therefore, we have the following:

\begin{proposition}\label{prop:representation}
Let $f : M^n \rightarrow \R^{n+1}$ be a 
weakly complete flat front which is not totally-umbilic.
Around each non-umbilic point,
there exist an interval $I$, 
a local coordinate system
$(I\times \R^{n-1} \,;\, t, w_2,\dots,w_n)$,
a regular curve $\gamma : I\rightarrow S^n$
parametrized by arc-length,
an orthonormal frame $\{\vect{e}_2, \dots , \vect{e}_n\}$
of the normal bundle $(\gamma')^{\perp}$ along $\gamma$
and a smooth function $a(t)$ on $I$
such that $f$ is given by 
\begin{equation}\label{eq:representation-hat}
  f(t,w_2,\dots,w_n) = \hat\sigma(t) + \sum_{j=2}^n w_j\vect{e}_j(t)\qquad
  \left( \hat\sigma(t):=\int_0^t  a(\tau) \gamma'(\tau)  d\tau \right)
\end{equation}
on $I\times \R$.
Conversely,
for a given unit speed regular curve 
$\gamma : I\rightarrow S^n$
defined on an interval $I$,
an orthonormal frame $\{\vect{e}_2, \dots , \vect{e}_n\}$
of the normal bundle $(\gamma')^{\perp}$ along $\gamma$
and a smooth function $a(t)$ on $I$,
$f: I\times \R^{n-1} \rightarrow \R^{n+1}$
defined as \eqref{eq:representation-hat}
is an umbilic-free flat front.
\end{proposition}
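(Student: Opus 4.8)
The plan is to handle the two assertions separately; most of the computation for the direct statement is already carried out in the paragraphs preceding the proposition, so I would mainly assemble it. Starting from Lemma~\ref{lem:representation}, around a non-umbilic point $q$ there is a chart $(W;t,w_2,\dots,w_n)$ on which $f$ has the form \eqref{eq:representation} with base curve $\sigma$ given by \eqref{eq:sigma} and $\gamma$ a unit-speed regular curve in $S^n$; after rotating the orthonormal normal frame $\{\vect{e}_j\}$ along $\gamma$ (and, correspondingly, the coordinates $w_2,\dots,w_n$) one may assume $\{\vect{e}_j\}$ is the Bishop frame of $\gamma$, so that \eqref{eq:Bishop} holds. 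Computing $f_t$ and using $d\nu\cdot d\nu=dt^2$ yields the explicit lift metric $ds^2_\#$ displayed above, from which one reads off that each $w_j$-curve is a unit-speed geodesic of $ds^2_\#$. Since $f$ is weakly complete, $ds^2_\#$ is a complete Riemannian metric, hence these geodesics extend to all parameter values; this is exactly what lets the chart be taken with domain $I\times\R^{n-1}$ while \eqref{eq:representation} remains valid. Finally the coordinate change $w_j\mapsto w_j+b_j(t)$ with $b_j(t)=-\int_0^t a_j(\tau)\,d\tau$ absorbs the terms $a_j\vect{e}_j$ $(j\ge2)$ into the base curve, leaving $\hat\sigma'(t)=a(t)\gamma'(t)$ with $a=a_1-\sum_{j\ge2}b_j\mu_j$, which is precisely \eqref{eq:representation-hat}.

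For the converse I would fix $\gamma$, $\{\vect{e}_2,\dots,\vect{e}_n\}$, $a$, define $f$ by \eqref{eq:representation-hat}, and propose $\nu(t,w_2,\dots,w_n):=\gamma(t)$ as unit normal. Differentiating the orthonormality relations $\vect{e}_j\cdot\gamma=0$, $\vect{e}_j\cdot\gamma'=0$, $\vect{e}_j\cdot\vect{e}_k=\delta_{jk}$ shows that $\vect{e}_j'$ has no $\gamma$-component and that its $\vect{e}_k$-components are skew-symmetric in $(j,k)$; writing $\vect{e}_j'=-\mu_j\gamma'+\sum_k\omega_{jk}\vect{e}_k$ with $\omega_{jk}=-\omega_{kj}$, one finds $f_{w_j}=\vect{e}_j$ and
\[
  f_t=\Bigl(a(t)-\sum_{j=2}^n w_j\mu_j(t)\Bigr)\gamma'(t)
      +\sum_{k=2}^n\Bigl(\sum_{j=2}^n w_j\omega_{jk}(t)\Bigr)\vect{e}_k(t),
\]
so that $df\cdot\nu=0$ because $\gamma'$ and $\vect{e}_2,\dots,\vect{e}_n$ are all orthogonal to $\gamma$. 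Hence $f$ is a frontal whose Gauss map $\nu$ depends only on $t$.

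It then remains to check the three adjectives. That $f$ is a \emph{front}: from $L_{w_j}=(\vect{e}_j,0)$ and $L_t=(f_t,\gamma')$, a vanishing linear combination $c_0L_t+\sum_j c_jL_{w_j}=0$ forces $c_0\gamma'=0$, hence $c_0=0$, and then $\sum_j c_j\vect{e}_j=0$, hence all $c_j=0$; thus $L=(f,\nu)$ is an immersion. That $f$ is \emph{flat}: $\nu$ is a function of $t$ alone, so ${\rm rank}(d\nu)\le1$ and Fact~\ref{lem:degenerate-nu} applies. That $f$ is \emph{umbilic-free}: if $\delta_1(df)_p=\delta_2(d\nu)_p$, evaluating on $\partial_{w_j}$ gives $\delta_1\vect{e}_j=0$, so $\delta_1=0$, and then evaluating on $\partial_t$ gives $\delta_2\gamma'=0$, so $\delta_2=0$, contradicting $(\delta_1,\delta_2)\ne(0,0)$.

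The main obstacle I anticipate lies in the direct statement, at the step where the local chart from Lemma~\ref{lem:representation} is promoted to one defined on all of $I\times\R^{n-1}$: one must argue carefully that weak completeness forces the $w_j$-coordinate curves, being $ds^2_\#$-geodesics, to be defined on the whole real line, and that the representation \eqref{eq:representation} persists along the extended curves. The converse, by contrast, is a chain of direct verifications; the only point there that needs a little care is that the normal frame is allowed to be an arbitrary orthonormal frame rather than the Bishop frame, which is exactly why one records the skew-symmetry of the coefficients $\omega_{jk}$.
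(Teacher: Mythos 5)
Your argument for the direct statement is exactly the paper's: pass from Lemma~\ref{lem:representation} to the Bishop frame, read off that the $w_j$-curves are geodesics of the explicitly computed lift metric $ds^2_\#$ so that weak completeness extends the chart to $I\times\R^{n-1}$, and then absorb the $a_j\vect{e}_j$ terms via the shift $w_j\mapsto w_j+b_j(t)$. The paper states the converse without proof, and your direct verification of the frontal/front/flat/umbilic-free properties (using the skew-symmetry of the $\omega_{jk}$ for a general orthonormal normal frame) is correct and fills that in.
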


\begin{proof}[Proof of Theorem \ref{thm:main}]
We shall give a proof by contradiction.
Let $f : M^n \rightarrow \R^{n+1}$ a complete flat front $(n\geq3)$.
Assume that the singular set $S(f)$ is not empty.

Take a singular point $q\in S(f)$.
By Facts \ref{lem:singular-umb} and \ref{lem:degenerate-nu},
$q$ is not an umbilic point.
Since $f$ is complete,
it is weakly complete and, 
by Proposition \ref{prop:representation},
we have that $f$ is given by \eqref{eq:representation-hat}
on $U:=I\times \R^{n-1}$.
Without loss of generality, 
$\{\vect{e}_2, \dots , \vect{e}_n\}$
is the Bishop frame such that 
$
\vect{e}_j'(t)=-\mu_j(t)\vect{e}(t)
$
holds for each $j=2,\dots,n$ (cf.\ \eqref{eq:Bishop}).
We remark that the curvature function 
$\kappa_{\gamma}(t)$ of $\gamma(t)$
is given by
$$
  \kappa_{\gamma}(t) = \sqrt{(\mu_2(t))^2+\cdots +(\mu_n(t))^2}.
$$

We shall prove that the singular set $S(f)$ is not compact.
Differentiating \eqref{eq:representation-hat},
we have 
$f_t = \hat\rho(t,w_2,\dots,w_n)\vect{e}(t),$
$f_{w_j}=\vect{e}_j(t)$
for $j=1,\dots,n$, where
$$
  \hat\rho(t,w_2,\dots,w_n) := a(t)-\sum_{j=2}^nw_j \mu_j(t).
$$
Since
$
  f_t\wedge f_{w_2} \wedge \cdots \wedge f_{w_n}
  = \hat\rho(t,w_2,\dots,w_n) 
        \vect{e}(t)\wedge \vect{e}_2(t) \wedge \cdots \wedge \vect{e}_n(t)
$,
we have
$
  S(f)\cap U = \{ p \in U\,;\,  \hat\rho(p)=0 \}.
$
Let $S_1$, $S_2$ be the subsets of $S(f)\cap U$ defined by
\begin{align*}
  S_1 &:= \{ (t,w_2,\dots,w_n) \in U \,;\, 
          a(t)=\kappa_\gamma(t)=0 \},\\
  S_2 &:= \{ (t,w_2,\dots,w_n) \in U \,;\, 
         \hat\rho(t,w_2,\dots,w_n)=0, \kappa_\gamma(t)\neq0 \},
\end{align*}
respectively.
Then, we have $S(f)\cap U=S_1\cup S_2$.

Since $\nu(t,w_2,\dots,w_n)=\gamma(t)$ 
gives a unit normal vector field along $f$,
the lift metric $ds^2_\#$ is given by
$$
  ds^2_\#
  = \left(1 + \hat\rho^2\right) dt^2 + \sum_{j=2}^n dw_j^2
$$
on $U$.

If $q=(t^o,w_2^o,\dots,w_n^o)\in S_1$, 
$a(t^o)=\kappa_\gamma(t^o)=0$ holds.
In this case, we have
$(t^o,w_2,\dots,w_n)\in S_1$ for any 
$w_j\in \R$ $(j=2,\dots,n)$.
In particular, 
$c_1 : \R \rightarrow S_1 \,(\subset M^n)$
given by
$$
c_1(x):= (t^o,x,0,\dots,0) 
$$
is a geodesic with respect to the lift metric $ds^2_\#$
such that $\hat{c}_1:=f\circ c_1$ is a straight line in $\R^{n+1}$,
and hence $ S(f)\, (\supset S_1)$ cannot be compact.

If $q=(t^o,w_2^o,\dots,w_n^o)\in S_2$, 
we have $\kappa_\gamma(t^o)\neq0$.
Without loss of generality, 
we may assume that $\mu_n(t^o)\neq0$.
Then, there exists $\varepsilon >0$ such that 
$\mu_n(t)\neq0$
for each $t\in I(t^o,\varepsilon):=(t^o-\varepsilon,t^o+\varepsilon)$.
Thus,
\begin{equation*}
  S_2[t^o] := \left\{ (t,w_2,\dots,w_n) 
                    \in I(t^o,\varepsilon)\times \R^{n-1} \,;\, 
                       \vphantom{\frac1{6}} 
                     w_n = \frac{a(t)}{\mu_n(t)}
                           -\sum_{j=2}^{n-1}\hat{\mu}_j(t) w_j 
                    \vphantom{\frac1{6}} \right\}
\end{equation*}
is a subset of $S_2$,
where $\hat{\mu}_j(t) := \mu_j(t)/\mu_n(t)$ for $j=2,\dots, (n-1)$.
Set a positive number $k^o$ as $k^o:= \sqrt{1+(\hat{\mu}_2(t^o))^2}$.
Since $c_2 : \R \rightarrow S_2[t^o]$
given by
$$
c_2(x):= 
\left( t^o,\frac1{k^o}x,0,\dots,0,
\frac{a(t^o)}{\mu_n(t^o)}-\frac{\hat{\mu}_2(t^o)}{k^o} x  \right) 
$$
is a geodesic with respect to the lift metric $ds^2_\#$
such that $\hat{c}_2:=f\circ c_2$ is a straight line in $\R^{n+1}$,
and hence $S(f)\, (\supset S_2[t^o])$ cannot be compact.

By the completeness of $f$, 
the singular set $S(f)$ must be compact, which is a contradiction.
Hence, we have that $S(f)$ must be empty,
and then $f$ is a complete flat regular hypersurface.
\end{proof}

\begin{acknowledgements}
The author would like to thank 
Professors Masaaki Umehara and Kotaro Yamada
for their valuable comments.
\end{acknowledgements}

\end{document}